\documentclass[11pt]{amsart}
\usepackage{amsmath,amsthm,amscd,amsfonts, amssymb}
\textwidth 150mm
\oddsidemargin 5mm
\textheight 230mm
\topmargin -10mm
\usepackage{graphicx}
\usepackage{tikz}
\usepackage{color}
\include{diagxy}
\usepgflibrary{shapes}
\newcommand{\sect}[1]{\section{#1}\setcounter{equation}{0}}

\font\mbn=msbm10 scaled \magstep1
\font\mbs=msbm7 scaled \magstep1
\font\mbss=msbm5 scaled \magstep1
\newfam\mbff
\textfont\mbff=\mbn
\scriptfont\mbff=\mbs
\scriptscriptfont\mbff=\mbss

\newcommand{\N}       { \mathbb{N}}
\newcommand{\Z}        {\mathbb{Z}  }

\newtheorem{Th}{Theorem}[section]
\newtheorem{Lm}[Th]{Lemma}
\newtheorem{C}[Th]{Corollary}

\newtheorem{R}[Th]{Remark}

\begin{document}
\title[Banach spaces of polynomials]{Banach spaces of polynomials as ``large'' subspaces of ${\mathbf \ell^\infty}$-spaces}

\author{Alexander Brudnyi} 
\address{Department of Mathematics and Statistics\newline
\hspace*{1em} University of Calgary\newline
\hspace*{1em} Calgary, Alberta\newline
\hspace*{1em} T2N 1N4}
\email{albru@math.ucalgary.ca}
\keywords{Banach-Mazur compactum,  filtered algebra, entropy}
\subjclass[2010]{Primary 46B20, Secondary 46E15}

\thanks{Research supported in part by NSERC}

\begin{abstract}
In this note we study Banach spaces of traces of real polynomials on $\mathbb R^n$ to compact subsets equipped with supremum norms from the point of view of Geometric Functional Analysis.
\end{abstract}

\date{}

\maketitle

\sect{Main Results}
Recall that the {\em Banach-Mazur distance} between two $k$-dimensional real Banach spaces $E,F$ is defined as
\[
d_{BM}(E,F):=\inf\{\|u\|\cdot\|u^{-1}\|\},
\]
where the infimum is taken over all isomorphisms $u:E\rightarrow F$. We say that $E$ and $F$ are equivalent if they are isometrically isomorphic (i.e., $d_{BM}(E,F)=1$).  Then
$\ln d_{BM}$  determines a metric on the set $\mathcal B_k$ of equivalence classes of isometrically isomorphic $k$-dimensional Banach spaces (called the {\em Banach-Mazur compactum}). It is known that $\mathcal B_k$ is compact of $d_{BM}$-``diameter'' $\sim k$, see \cite{G}.

Let $C(K)$ be the Banach space of real continuous functions on a compact Hausdorff space $K$ equipped with the supremum norm.  Let $F\subset C(K)$ be a filtered subalgebra with filtration $\{0\}\subset F_0\subseteq F_1\subseteq\cdots\subseteq F_d\subseteq\cdots\subseteq F$ (that is, $F=\cup_{d\in\Z_+}F_i$ and $F_i\cdot F_j\subset F_{i+j}$ for all $i,j\in\Z_+$) such that $n_d:={\rm dim}\, F_d<\infty$ for all $d$. 
In what follows we assume that $F_0$ contains constant functions on $K$. Our main result is



\begin{Th}\label{te1}
Suppose there are $c\in\mathbb R$ and $\{p_d\}_{d\in\N}\subset\N$ such that
\begin{equation}\label{e2}
\frac{\ln n_{d\cdot p_d}}{p_d}\le c\quad\text{for all}\quad d\in\N.
\end{equation}
Then there exist linear injective maps $i_d: F_d\hookrightarrow \ell_{n_{d\cdot p_d}}^\infty$ such that
\[
d_{BM}(F_d, i_d(F_d))\le e^{c},\quad d\in\mathbb N.
\]
\end{Th}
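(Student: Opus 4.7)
The plan is to embed $F_d$ into $\ell_{n_{d\cdot p_d}}^\infty$ by evaluation at $N := n_{d\cdot p_d}$ carefully chosen points of $K$, and then exploit the filtration identity $f^{p_d}\in F_{d\cdot p_d}$ for $f\in F_d$ to convert a crude linear estimate on the bigger space $F_{d\cdot p_d}$ (with multiplicative loss $N$) into a near-isometric \emph{nonlinear} estimate on $F_d$ (with loss only $N^{1/p_d}\le e^{c}$, by \eqref{e2}).

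To choose the nodes I would use the classical Fekete/determinant-maximization trick. Fix a basis $g_1,\ldots,g_N$ of $F_{d\cdot p_d}$ and consider the continuous function
\[
D(y_1,\ldots,y_N):=\det\bigl(g_i(y_j)\bigr)_{i,j=1}^{N}
\]
on the compact space $K^N$; let $(x_1,\ldots,x_N)$ be a point where $|D|$ attains its maximum. Since every nonzero element of $F_{d\cdot p_d}\subset C(K)$ is somewhere nonzero, the point evaluations span $F_{d\cdot p_d}^*$, so $D\not\equiv 0$ and this maximum is strictly positive. Setting
\[
\ell_j(x):=\frac{D(x_1,\ldots,x_{j-1},x,x_{j+1},\ldots,x_N)}{D(x_1,\ldots,x_N)}
\]
and expanding along the $j$-th column shows $\ell_j\in F_{d\cdot p_d}$; the identities $\ell_j(x_k)=\delta_{jk}$ and $\|\ell_j\|_{C(K)}\le 1$ follow immediately from maximality of $|D|$. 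Hence $g=\sum_{j=1}^N g(x_j)\,\ell_j$ for every $g\in F_{d\cdot p_d}$, yielding the key linear estimate
\[
\|g\|_{C(K)}\le N\max_{1\le j\le N}|g(x_j)|,\qquad g\in F_{d\cdot p_d}.
\]

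The remainder is a short computation. Setting $i_d(f):=(f(x_1),\ldots,f(x_N))\in\ell_N^\infty$ for $f\in F_d$, the bound $\|i_d(f)\|_\infty\le\|f\|_{C(K)}$ is trivial. For the opposite direction I would apply the linear estimate to $g:=f^{p_d}\in F_{d\cdot p_d}$ (using $F_d\cdot F_d\cdots F_d\subset F_{d\cdot p_d}$), obtaining
\[
\|f\|_{C(K)}^{p_d}=\|f^{p_d}\|_{C(K)}\le N\max_j|f(x_j)|^{p_d}=N\,\|i_d(f)\|_\infty^{p_d},
\]
so $\|f\|_{C(K)}\le N^{1/p_d}\|i_d(f)\|_\infty\le e^{c}\,\|i_d(f)\|_\infty$ by \eqref{e2}. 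This forces injectivity of $i_d$ and gives the desired $d_{BM}(F_d,i_d(F_d))\le e^{c}$. The only step that is not routine is manufacturing a norming set of size \emph{exactly} $N=n_{d\cdot p_d}$ with Lebesgue constant $\le N$; the Fekete argument achieves this, and it is essential — any version that used more points, or that produced only a looser constant in the linear estimate, would be amplified by the extraction of a $p_d$-th root and would fail to match the hypothesis $N^{1/p_d}\le e^{c}$.
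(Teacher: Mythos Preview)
Your proof is correct and follows essentially the same route as the paper's: select $N=n_{d\cdot p_d}$ nodes in $K$ so that the associated ``Lagrange'' functions in $F_{d\cdot p_d}$ have sup norm at most $1$, deduce the linear bound $\|g\|_K\le N\max_j|g(x_j)|$ on $F_{d\cdot p_d}$, then apply it to $g=f^{p_d}$ and extract a $p_d$-th root (what the paper calls the ``method of E.~Landau''). The only cosmetic difference is terminology---the paper packages the node selection as an Auerbach basis of $F_{d\cdot p_d}$ whose dual basis consists of point evaluations, and remarks that its construction is that of fundamental Lagrange interpolation polynomials, which is exactly your Fekete/determinant-maximization argument spelled out.

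One small quibble with your closing commentary (not the proof itself): the $p_d$-th root \emph{attenuates} rather than amplifies multiplicative constants, so a linear estimate with constant $CN$ in place of $N$ would still give a bound $C^{1/p_d}e^{c}$, not a failure; the real reason you need exactly $N$ nodes is that the target space is prescribed to be $\ell_{N}^\infty$.
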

As a corollary we obtain:
\begin{C}\label{c1}
Suppose $\{n_d\}_{d\in\N}$ grows at most polynomially in $d$, that is, 
\begin{equation}\label{eq1.2}
\exists\, k, \hat c\in\mathbb R_+ \ \text{such that}\ \forall\, d\quad n_d\le \hat c d^k.
\end{equation}
Then for each natural number $s\ge 3$ there exist linear injective maps $i_{d,s}: F_d\hookrightarrow \ell_{N_{d,s}}^\infty$, where $N_{d,s}:= \left\lfloor \hat cd^k\cdot s^k\cdot \left(\lfloor\ln(\hat cd^k)\rfloor +1\right)^k\right\rfloor$,  such that 
\[
d_{BM}(F_d, i_{d,s}(F_d))\le \left(es^k\right)^{\frac 1s},\quad k\in\mathbb N.
\]
\end{C}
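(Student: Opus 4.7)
The plan is to reduce the Corollary to Theorem~\ref{te1} by selecting the sequence $\{p_d\}$ so as to balance the two terms that arise when plugging the polynomial growth bound into $\frac{\ln n_{d\cdot p_d}}{p_d}$. A natural choice is
\[
p_d := s\cdot L_d,\qquad L_d:=\lfloor \ln(\hat c d^k)\rfloor+1,
\]
so that $L_d>\ln(\hat c d^k)$ and $p_d\ge s$.

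With this choice, the polynomial bound $n_{d\cdot p_d}\le \hat c (d p_d)^k$ gives
\[
\frac{\ln n_{d\cdot p_d}}{p_d}\le \frac{\ln(\hat c d^k)}{sL_d}+\frac{k\ln(sL_d)}{sL_d}.
\]
The first summand is at most $\frac{1}{s}$ by the choice of $L_d$. For the second I would invoke that the function $x\mapsto \frac{\ln x}{x}$ is decreasing on $(e,\infty)$; since $s\ge 3>e$ and $p_d=sL_d\ge s$, this yields $\frac{\ln p_d}{p_d}\le \frac{\ln s}{s}$, hence the second summand is at most $\frac{k\ln s}{s}$. Adding, one obtains
\[
\frac{\ln n_{d\cdot p_d}}{p_d}\le \frac{1+k\ln s}{s}=\frac{\ln(es^k)}{s}=:c.
\]

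Having verified hypothesis \eqref{e2} with this $c$ and these $\{p_d\}$, I apply Theorem~\ref{te1} to obtain linear injections $F_d\hookrightarrow \ell^\infty_{n_{d\cdot p_d}}$ with Banach-Mazur distance at most $e^c=(es^k)^{1/s}$. To land in $\ell^\infty_{N_{d,s}}$, I note that $n_{d\cdot p_d}\le \hat c d^k\cdot s^k\cdot L_d^k$ and that $n_{d\cdot p_d}$ is an integer, so $n_{d\cdot p_d}\le N_{d,s}$; then I post-compose with the canonical isometric embedding $\ell^\infty_{n_{d\cdot p_d}}\hookrightarrow \ell^\infty_{N_{d,s}}$ given by padding with zero coordinates, which does not change the Banach-Mazur distance.

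I do not anticipate a serious obstacle, as the work is essentially reduced to a one-parameter optimization. The only delicate point is the calibration of $p_d$: one needs it large enough to swallow the $\ln(\hat c d^k)$ term but small enough that the $\frac{k\ln p_d}{p_d}$ penalty stays comparable to $\frac{k\ln s}{s}$. This is precisely what forces the restriction $s\ge 3$, so that $p_d\ge s$ lies in the decreasing range of $\frac{\ln x}{x}$.
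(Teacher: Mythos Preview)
Your proof is correct and follows the same route as the paper: you choose the identical sequence $p_d=s\bigl(\lfloor\ln(\hat c d^k)\rfloor+1\bigr)$, bound $\frac{\ln n_{d\cdot p_d}}{p_d}$ by $\frac{1}{s}+\frac{k\ln s}{s}$, and then invoke Theorem~\ref{te1}. You supply two details the paper leaves implicit---the monotonicity argument for $\frac{\ln p_d}{p_d}\le\frac{\ln s}{s}$ (which is where $s\ge 3$ is used) and the zero-padding step from $\ell^\infty_{n_{d\cdot p_d}}$ into $\ell^\infty_{N_{d,s}}$---and both are handled correctly.
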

Let $\mathcal F_{\hat c,k}$ be the family of all possible filtered algebras $F$ on compact Hausdorff spaces $K$ satisfying condition \eqref{eq1.2}.
By $\mathcal B_{\hat c, k, \bar n_d}\subset\mathcal B_{\bar n_d}$ we denote the closure in $\mathcal B_{\bar n_d}$ of the set formed by all subspaces $F_d$ of algebras $F\in\mathcal F_{\hat c,k}$
having a fixed dimension $\bar n_d\in\mathbb N$.

Corollary \ref{c1} allows to estimate the metric entropy of $\mathcal B_{\hat c, k, \bar n_d}$. Recall that for a compact subset $S\subset  \mathcal B_{\bar n_{d}}$ its {\em $\varepsilon$-entropy} ($\varepsilon>0$) is defined as
$H(S,\varepsilon):=\ln N(S,d_{BM}, 1+\varepsilon)$, 
where $N(S,d_{BM}, 1+\varepsilon)$ is the smallest number of open $d_{BM}$-``balls'' of radius $1+\varepsilon$ that cover $S$.
\begin{C}\label{c2}
For $k\ge 1$ there exists a numerical constant $C$ such that for each $\varepsilon\in (0,\frac{1}{2}]$ 
\[
H\bigl(\mathcal B_{\hat c, k, \bar n_d},\varepsilon\bigr)\le (Ck\cdot\ln(k+1))^{k}\cdot (\hat cd^{k})^2\cdot (\ln (\hat cd^k)+1)^{k+1}\cdot\left(\frac{1}{\varepsilon}\right)^k\cdot \left(\ln\left(\frac{1}{\varepsilon}\right)\right)^{k+1}.
\]
\end{C}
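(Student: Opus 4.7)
The plan is to use Corollary~\ref{c1} to embed each admissible $F_d$ into a fixed ambient $\ell_N^\infty$ and then cover the set of $\bar n_d$-dimensional subspaces of $\ell_N^\infty$ in the Banach--Mazur metric by an explicit finite net. Fix $\varepsilon \in (0, 1/2]$ and a parameter $s \ge 3$ to be chosen at the end, and write $N := N_{d,s}$ and $\lambda_s := (es^k)^{1/s}$. By Corollary~\ref{c1}, every $F_d$ appearing in the definition of $\mathcal B_{\hat c, k, \bar n_d}$ sits at $d_{BM}$-distance at most $\lambda_s$ from some $V$ in the family $\mathcal G_N$ of $\bar n_d$-dimensional subspaces of $\ell_N^\infty$. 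Consequently any $\eta$-net of $\mathcal G_N$ in $d_{BM}$ furnishes a $\lambda_s(1+\eta)$-net of $\mathcal B_{\hat c, k, \bar n_d}$, and I will choose $\eta$ and $s$ so that $\lambda_s(1+\eta) \le 1+\varepsilon$.

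To build such an $\eta$-net, observe that every $V \in \mathcal G_N$ is linearly isomorphic to $(\mathbb R^{\bar n_d}, \|x\|_V = \max_{1 \le i \le N}|\langle u_i, x\rangle|)$ for some $u_1, \ldots, u_N$. By John's theorem I may place $B_V$ in John position, so that $u_i \in B_2^{\bar n_d}$ and $\|x\|_2 \le \sqrt{\bar n_d}\,\|x\|_V$ for every $x$. Let $\mathcal N_\delta$ be a Euclidean $\delta$-net of $B_2^{\bar n_d}$ of cardinality at most $(3/\delta)^{\bar n_d}$, and replace each $u_i$ by its closest $\tilde u_i \in \mathcal N_\delta$. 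The resulting norm $\|x\|_{\tilde V} := \max_i|\langle \tilde u_i, x\rangle|$ satisfies
\[
(1 - \delta\sqrt{\bar n_d})\,\|x\|_V \;\le\; \|x\|_{\tilde V} \;\le\; (1 + \delta\sqrt{\bar n_d})\,\|x\|_V,
\]
whence $d_{BM}(V, \tilde V) \le (1+\delta\sqrt{\bar n_d})/(1-\delta\sqrt{\bar n_d})$ whenever $\delta\sqrt{\bar n_d} < 1$. The collection of such $\tilde V$ therefore forms an $\eta$-net of $\mathcal G_N$ with $\eta = O(\delta\sqrt{\bar n_d})$, of total cardinality at most $(3/\delta)^{\bar n_d N}$.

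It remains to balance the two sources of error. I set $\delta\sqrt{\bar n_d} \asymp \varepsilon$ and choose $s$ so that the inequality $(1 + k\ln s)/s \le \ln(1 + c\varepsilon)$ holds for a small constant $c$, which forces $\lambda_s \le 1 + O(\varepsilon)$; an elementary analysis shows $s \le Ck\ln(k+1) \cdot \varepsilon^{-1}\ln(1/\varepsilon)$ is admissible for an absolute $C$. Substituting this bound on $s^k$ together with $N_{d,s} \le \hat c d^k \cdot s^k \cdot (\ln(\hat c d^k)+1)^k$, $\bar n_d \le \hat c d^k$, and $\ln(3/\delta) \lesssim \ln(\hat c d^k) + \ln(1/\varepsilon) + 1$ into the log-cardinality $\bar n_d \cdot N_{d,s} \cdot \ln(3/\delta)$ yields the advertised bound, the factor $\ln(3/\delta)$ promoting both logarithmic exponents from $k$ to $k+1$. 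The chief obstacle is precisely this calibration: the exponents $k+1$ in both logarithms of the target bound leave essentially no slack, so the inequality defining $s$ must be solved carefully and the geometric constants in the $\delta$-net estimate tracked throughout; a minor but necessary sanity check is that $\|\cdot\|_{\tilde V}$ is a genuine norm, which follows from the left-hand inequality above once $\delta\sqrt{\bar n_d} < 1$ --- automatic from the chosen $\delta$ since $\varepsilon \le 1/2$.
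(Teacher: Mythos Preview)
Your proof is correct and follows essentially the same route as the paper: embed each $F_d$ into $\ell_{N_{d,s}}^\infty$ via Corollary~\ref{c1}, cover the set of $\bar n_d$-dimensional subspaces of $\ell_{N_{d,s}}^\infty$ by a finite $d_{BM}$-net, and then calibrate $s$ and the net resolution against $\varepsilon$. The only substantive difference is in how the net on the Grassmannian is obtained. The paper quotes Pisier's lemma (\cite[Lm.~1.2]{P}), which gives an $R$-net of cardinality $(1+2/\xi)^{N\bar n_d}$ with $R=(1+\xi\bar n_d)/(1-\xi\bar n_d)$; you instead build the net by hand, putting each unit ball in John position and discretising the defining functionals, obtaining cardinality $(3/\delta)^{N\bar n_d}$ with radius $(1+\delta\sqrt{\bar n_d})/(1-\delta\sqrt{\bar n_d})$. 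Up to the substitution $\xi\bar n_d\leftrightarrow\delta\sqrt{\bar n_d}$ these are the same estimate (yours is in fact marginally sharper inside the logarithm), and the ensuing calibration $s\lesssim k\ln(k+1)\cdot\varepsilon^{-1}\ln(1/\varepsilon)$ together with the splitting $\ln(\hat c d^k)+\ln(1/\varepsilon)+1\le(\ln(\hat c d^k)+1)(\ln(1/\varepsilon)+1)$ matches the paper's computation and yields the stated bound.
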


\sect{Basic Example: Banach Spaces of Polynomials}
Let  $\mathcal P_d^n$ be the space of real polynomials on $\mathbb R^n$ of degree at most $d$.  For a compact subset $K\subset\mathbb R^n$ by $\mathcal P_d^n|_K$ we denote the trace space of restrictions of polynomials in  $\mathcal P_d^n$  to $K$ equipped with the supremum norm. Applying Corollary \ref{c1} to algebra $\mathcal P^n|_K:=\cup_{d\ge 0}\,\mathcal P_d^n|_K$ we obtain:\medskip 

\noindent (A) {\em There exist  linear injective maps $i_{d, K}: \mathcal P_d^n|_K\hookrightarrow \ell_{N_{d,n}}^\infty$, where 
\begin{equation}\label{ndn}
N_{d,n}:=\left\lfloor e^{2n}\cdot (n+2)^{2n}\cdot d^n\cdot \left(2n+1+\lfloor n\ln d\rfloor\right)^n\right\rfloor,
\end{equation}
such that}
\begin{equation}\label{bmdn}
d_{BM}\left(\mathcal P_d^n|_K, i_{d,K}( \mathcal P_d^n|_K)\right)\le\left(e\cdot (n+2)^2\right)^{\frac{1}{n+2}}\, (< 2.903).
\end{equation}
\smallskip

Indeed,
\begin{equation}\label{ind}
\widetilde N_{d,n}:={\rm dim}\,\mathcal P_d^n|_K\le {d+n \choose n}<\left(\frac{e\cdot (d+n)}{n}\right)^n\le\left(\frac{e\cdot (1+n)}{n}\right)^n\cdot d^n<e^{2n}\cdot d^n.
\end{equation}
Hence, Corollary \ref{c1} with $c=e^{2n}$, $k:=n$ and $s:=(n+2)^2$ implies the required result.\smallskip

If $K$ is $\mathcal P^n$-{\em determining} (i.e., no nonzero polynomial vanish on $K$), then $\widetilde N_{d,n}={d+n \choose n}$ and so for some constant $c(n)$ (depending on $n$ only) we have
\begin{equation}\label{e3.2}
\widetilde N_{d,n}<N_{d,n}\le c(n)\cdot \widetilde N_{d,n}\cdot\bigl(1+\ln\widetilde N_{d,n}\bigr)^n.
\end{equation}
Hence, $V_{d,n}(K):=i_{d,K}( \mathcal P_d^n|_K)$ is a {\em ``large'' subspace} of $\ell_{N_{d,n}}^\infty$ in terminology of \cite{B}. Therefore from (A) and  \cite[Prop.~8]{B} applied to $V_{d,n}(K)$ we obtain:
\medskip

\noindent (B) {\em There is a constant $c_1(n)$ (depending on $n$ only) such that for each $\mathcal P^n$-determining compact set $K\subset\mathbb R^n$ there exists an $m$-dimensional subspace $F\subset \mathcal P_d^n|_K$ with }
\begin{equation}\label{e3.3}
m:={\rm dim}\,F>c_1(n)\cdot\bigl(\widetilde N_{d,n}\bigr)^{\frac 12}\quad\text{ and}\quad
d_{BM}(F,\ell_m^\infty)\le 3.
\end{equation}

{\em In turn, if $\hat d\in\N$ is such that $N_{\hat d,n}\le c_1(n)\cdot\bigl(\widetilde N_{d,n}\bigr)^{\frac 12}$, then due to property (A) for each $\mathcal P^n$-determining compact set $K'\subset\mathbb R^n$ there exists a $\widetilde N_{\hat d,n}$-dimensional subspace $F_{\hat d,n,K'}\subset F$ such that}
\begin{equation}\label{star}
d_{BM}\bigl(F_{\hat d,n,K'},\mathcal P_{\hat d}^n|_{K'}\bigr)<9.
\end{equation}
\smallskip

Further, the dual space $(V_{d}^n(K))^*$ of $V_{d}^n(K)$ is the quotient space of $\ell_{N_{d,n}}^1$. In particular, the closed ball of $(V_{d}^n(K))^*$
contains at most $c(n)\cdot \widetilde N_{d,n}\cdot\bigl(1+\ln\widetilde N_{d,n}\bigr)^n$ extreme points, see \eqref{e3.2}. Thus the balls of $(V_{d}^n(K))^*$ and $V_{d}^n(K)$ are ``quite different'' as convex bodies. This is also expressed in the following property (similar to the celebrated John ellipsoid theorem but with an extra logarithmic factor) which is a consequence of property (A) and \cite[Prop.~1]{DMTJ}:
\medskip

\noindent (C) {\em There is a constant $c_2(n)$ (depending on $n$ only) such that for all $\mathcal P^n$-determining compact sets $K_1, K_2\subset\mathbb R^n$}
\begin{equation}\label{e3.4}
d_{BM}\bigl(\mathcal P_d^n|_{K_1},(\mathcal P_d^n|_{K_2})^*\bigr)\le c_2(n)\cdot (\widetilde N_{d,n}\cdot (1+\ln \widetilde N_{d,n}))^{\frac 12}.
\end{equation}
\smallskip

A stronger inequality is valid if we replace $(\mathcal P_d^n|_{K_2})^*$ above by $\ell_{\widetilde N_{d,n}}^1$, see \cite[Th.~2]{DMTJ}.
\begin{R}\label{re3.1}
{\rm Property (C) has the following geometric interpretation. By definition, $(\mathcal P_d^n|_{K_2})^*$ is a 
$\widetilde N_{d,n}$-dimensional real Banach space generated by evaluation functionals $\delta_x$ at points $x\in K_2$ with the closed unit ball being the balanced convex hull of the set $\{\delta_x\}_{x\in K_2}$. Thus $K_2$ admits a natural isometric embedding into the unit sphere of  $(\mathcal P_d^n|_{K_2})^*$. Moreover, the Banach space of linear maps $(\mathcal P_d^n|_{K_2})^*\rightarrow \mathcal P_d^n|_{K_1}$ equipped with the operator norm is isometrically isomorphic to the Banach space of real polynomial maps $p:{\mathbb R}^n\rightarrow \mathcal P_d^n|_{K_1}$ of degree at most $d$ (i.e., $f^*\circ p\in\mathcal P_d^n$ for all $f^*\in (\mathcal P_d^n|_{K_1})^*$) with norm $\|p\|:=\sup_{x\in K_2}\|p(x)\|_{\mathcal P_d^n|_{K_1}}$.
Thus property (C) is equivalent to the following one:}\smallskip

\noindent {\rm (C$'$)} There exists a polynomial map $p:{\mathbb R}^n\rightarrow \mathcal P_d^n|_{K_1}$ of degree at most $d$ such that the balanced convex hull of $p(K_2)$ contains the closed unit ball of $\mathcal P_d^n|_{K_1}$ and is contained in the closed ball of radius
$c_2(n)\cdot (\widetilde N_{d,n}\cdot (1+\ln \widetilde N_{d,n}))^{\frac 12}$ of this space (both centered at $0$).
\end{R}

Our next property, a consequence of Corollary \ref{c2} and \eqref{ind}, estimates the metric entropy of the closure of the set $\widetilde{\mathcal P}_{d,n}\subset\mathcal B_{\widetilde N_{d,n}}$ formed by all $\widetilde N_{d,n}$-dimensional spaces $\mathcal P_d^n|_{K}$ with $\mathcal P^n$-determining compact subsets $K\subset\mathbb R^n$.\medskip

\noindent (D) {\em There exists a numerical constant $c>0$ such that for each $\varepsilon\in (0,\frac{1}{2}]$,}
\begin{equation}\label{entro}
H\bigl({\rm cl}(\widetilde{\mathcal P}_{d,n}),\varepsilon\bigr)\le (cn^{2}\cdot\ln(n+1))^{n}\cdot d^{2n}\cdot (1+\ln d)^{n+1}\cdot\left(\frac{1}{\varepsilon}\right)^n\cdot \left(\ln\left(\frac{1}{\varepsilon}\right)\right)^{n+1}.
\end{equation}

\begin{R}\label{rem3.2}
{\rm The above estimate shows that  $\widetilde{\mathcal P}_{d,n}$ with sufficiently large $d$ and $n$ is much less massive than $\mathcal B_{\widetilde N_{d,n}}$. Indeed, as follows from \cite[Th.~2]{Br} 
\[H(\mathcal B_{\widetilde N_{d,n}},\varepsilon)\sim \left(\frac{1}{\varepsilon}\right)^{\frac{\widetilde N_{d,n}-1}{2}}\quad \text{as}\quad \varepsilon\rightarrow 0^+
\]
(here the equivalence depends on $d$ and $n$ as well). On the other hand, \cite[Th.~1.1]{P} implies that for any $\varepsilon>0$,}
\[
0<\liminf_{\widetilde N_{d,n}\rightarrow\infty}\frac{\ln H(\mathcal B_{\widetilde N_{d,n}},\varepsilon)}{\widetilde N_{d,n}}\le \limsup_{\widetilde N_{d,n}\rightarrow\infty}\frac{\ln H(\mathcal B_{\widetilde N_{d,n}},\varepsilon)}{\widetilde N_{d,n}} <\infty.
\]

{\rm It might be of interest to} find sharp asymptotics of $H\bigl({\rm cl}(\widetilde{\mathcal P}_{d,n}),\varepsilon\bigr)$ as $\varepsilon\rightarrow 0^+$ and $d\rightarrow\infty$, {\rm and to} compute (up to a constant depending on $n$) $d_{BM}$-``diameter'' of $\widetilde{\mathcal P}_{d,n}$.
\end{R}

Similar results are valid for $K$ being a compact subset of a real algebraic variety $X\subset\mathbb R^n$ of dimension $m<n$ such that if a polynomial vanishes on $K$, then it vanishes on $X$ as well. In this case there are positive constants $c_X, \tilde c_X$ depending on $X$ only such that $\tilde c_X d^m\le {\rm dim}\,\mathcal P_d^n|_K\le c_X d^m$. For instance, 
Corollary \ref{c1}  with  $c=c_X$, $k:=m$ and $s:=(m+2)^2$ implies that $\mathcal P_d^n|_K$ is linearly embedded into $\ell_{N_{d,X}}^\infty$, where $N_{d,X}:= \left\lfloor c_X d^m\cdot (m+2)^{2m}\cdot \left(\lfloor\ln(c_Xd^m)\rfloor +1\right)^m\right\rfloor$,
with distortion $<2.903$. We leave the details to the reader.\smallskip

\section{Proofs}

\begin{proof}[Proof of Theorem \ref{te1}]
Since ${\rm dim}\, F_i=n_i$, $i\in\N$, and evaluations $\delta_z$ at points $z\in K$ determine bounded linear functionals on $F_i$, the Hahn-Banach theorem implies easily that ${\rm span}\,\{\delta_z\}_{z\in K}= F_i^*$. Moreover, $\|\delta_z\|_{F_i^*}=1$ for all $z\in K$ and the closed unit ball of $F_i^*$ is the balanced convex hull of the set $\{\delta_z\}_{z\in K}$. Let $\{f_{1i},\dots,f_{n_i i}\}\subset F_i$ be an Auerbach basis with the dual basis $\{\delta_{z_{1i}},\dots,\delta_{z_{n_i i}}\}\subset F_i^*$, that is, $f_{ki}(\delta_{z_{li}}):=f_{ki}(z_{li})=\delta_{kl}$ {\rm (the\ Kronecker-delta)} and
$\|f_{ki}\|_K=1$ for all $k$. (Its construction is similar to that of the fundamental Lagrange interpolation polynomials for $F_i=\mathcal P_i^n|_K$, see, e.g., \cite[Prop.~2.2]{BY}.) 
 
Now, we use a ``method of  E.~Landau''  (see, e.g., \cite[Ch.\,3,  \S\,2]{PS}).

By the definition, for each $g\in F_i$ we have $g(z)=\sum_{k=1}^{n_i}f_{ki}(z)g(z_{ki})$, $z\in K$. Hence,
$\|g\|_K\le n_i\|g\|_{\{z_{1i},\dots, z_{n_i i}\}}$. Applying the latter inequality to $g=f^{p_d}$, $f\in F_d$, containing in $F_{i}$, $i:=d\cdot p_d$, and using condition \eqref{e2} we get
for $A_d:=\{z_{1 i},\dots, z_{n_{i} i}\}\subset K$
\[
\|f\|_K=\left(\|g\|_K\right)^{\frac{1}{p_d}}\le \left(n_{d\cdot p_d}\right)^{\frac{1}{p_d}}\cdot \left(\|g\|_{A_d}\right)^{\frac{1}{p_d}}\le
e^{c}\cdot\|f\|_{A_d}.
\]

Thus, restriction $F_d\mapsto F_d|_{A_d}$ determines the required map $i_d: F_d\hookrightarrow\ell_{n_{d\cdot p_d}}^\infty$.   
\end{proof}
\begin{proof}[Proof of Corollary \ref{c1}]
We set $p_d:=s\cdot(\lfloor\ln(\hat cd^k)\rfloor +1)$, $d\in\N$. Then the condition of the corollary implies
\[
\frac{\ln n_{d\cdot p_d}}{p_d}\le \frac{\ln (\hat cd^k)+k\ln p_d}{p_d}\le \frac 1s+\frac{k\ln s}{s}=:c.
\]
Thus the result follows from Theorem \ref{te1}.
\end{proof}

\begin{proof}[Proof of Corollary \ref{c2}]
We make use of \cite[Lm.~1.2]{P} adapted to our setting:
\begin{Lm}\label{le3.2}
Let $S_{\bar n_d}\subset {\mathcal B}_{\bar n_d}$ be the subset formed
by all $\bar n_d$-dimensional subspaces of $\ell_{N_{d,s}}^\infty$. Consider $0 <\xi < \frac{1}{\bar n_d}$ and let $R =\frac{1 + \xi \bar n_d}{1 -\xi \bar n_d}$.
Then $S_{\bar n_d}$ admits an $R$-net $T_R$ of cardinality at most $\left(1 + \frac{2}{\xi}\right)^{N_{d,s}\cdot \bar n_d}$. 
\end{Lm}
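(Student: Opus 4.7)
The plan is to parametrize each $F\in S_{\bar n_d}$ by the matrix of an Auerbach basis and then discretize that matrix. Concretely, for each $F\in S_{\bar n_d}$ fix an Auerbach basis $\{f_1,\dots,f_{\bar n_d}\}$ of $F$ with dual functionals $\{f_1^*,\dots,f_{\bar n_d}^*\}$; viewing $F\subset\ell_{N_{d,s}}^\infty=\mathbb R^{N_{d,s}}$, record the $f_j$'s as the columns of a matrix $M(F)\in\mathbb R^{N_{d,s}\times\bar n_d}$. Since $\|f_j\|_\infty=1$, every entry of $M(F)$ lies in $[-1,1]$; since $\|f_j^*\|=1$, every expansion $v=\sum_j x_j f_j\in F$ satisfies $|x_j|=|f_j^*(v)|\le\|v\|_\infty$ for every $j$.

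Next I would discretize. Fix a $\xi$-net $\mathcal N_\xi\subset[-1,1]$ of cardinality at most $1+2/\xi$ (e.g.\ the arithmetic progression $\{-1,-1+\xi,-1+2\xi,\dots\}$ extended to include $1$), round each entry of $M(F)$ to its closest element of $\mathcal N_\xi$, and call the resulting matrix $M'(F)$, so that the entrywise sup-distance $\|M(F)-M'(F)\|_{\max}\le\xi$. Let $T_R$ be the collection of subspaces of $\ell_{N_{d,s}}^\infty$ spanned by the columns of the matrices obtained in this way; since there are at most $(1+2/\xi)^{N_{d,s}\cdot\bar n_d}$ matrices with entries drawn from $\mathcal N_\xi$, the claimed cardinality bound for $T_R$ follows immediately.

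The remaining task is to show $d_{BM}(F,F')\le R$ for $F':=\mathrm{span}(M'(F))$. Consider the candidate linear map $T:F\to F'$ defined on basis elements by $T(f_j):=j\text{-th column of }M'(F)$, so that $T(M(F)x)=M'(F)x$ for every $x\in\mathbb R^{\bar n_d}$. Using the Auerbach inequality $|x_j|\le\|M(F)x\|_\infty$ together with the entrywise bound on $M(F)-M'(F)$ one obtains
\[
\|M(F)x-M'(F)x\|_\infty \;\le\; \xi\sum_{j=1}^{\bar n_d}|x_j| \;\le\; \xi\,\bar n_d\,\|M(F)x\|_\infty,
\]
and hence, by the triangle inequality,
\[
(1-\xi\bar n_d)\,\|v\|_\infty \;\le\; \|Tv\|_\infty \;\le\; (1+\xi\bar n_d)\,\|v\|_\infty \qquad(v\in F).
\]
The lower bound, which uses the hypothesis $\xi\bar n_d<1$, simultaneously forces $\mathrm{rank}\,M'(F)=\bar n_d$ (so $F'\in S_{\bar n_d}$) and guarantees that $T$ is an isomorphism with $\|T\|\cdot\|T^{-1}\|\le(1+\xi\bar n_d)/(1-\xi\bar n_d)=R$.

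The main technical point is the interplay between the entrywise discretization and the Auerbach-basis bound $|x_j|\le\|v\|_\infty$: it is the elementary inequality $\|x\|_1\le\bar n_d\,\|x\|_\infty$ that produces the factor $\bar n_d$ in the distortion $R$, and this is precisely what makes the hypothesis $\xi\bar n_d<1$ sufficient for both the rank preservation and the Banach-Mazur distance bound. Everything else is routine counting.
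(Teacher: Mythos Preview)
Your argument is correct and is essentially the standard proof of this fact. The paper itself does not prove the lemma at all: it simply quotes \cite[Lm.~1.2]{P} (Pisier), and your Auerbach-basis-plus-entrywise-discretization argument is exactly the proof given there, so there is nothing to compare beyond noting that you have filled in what the paper delegated to a reference.

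One cosmetic remark: your parenthetical example of a $\xi$-net in $[-1,1]$ has spacing $\xi$ rather than $2\xi$, which gives a set of size roughly $2/\xi+2$; to meet the stated bound $1+2/\xi$ exactly, use the grid with spacing $2\xi$ (centers of intervals of length $2\xi$). This does not affect the argument, since any $\xi$-net of $[-1,1]$ with $O(1/\xi)$ points suffices and the bound $1+2/\xi$ is the standard one.
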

Now given $\varepsilon\in (0,\frac 12]$ we choose $s=\lfloor s_\varepsilon\rfloor$ with $s_\varepsilon$ satisfying
$(es_\varepsilon^k)^{\frac{1}{s_\varepsilon}}=\sqrt[4]{1+\varepsilon}$ and $\xi$ such that $R=R_\varepsilon=\sqrt[4]{1+\varepsilon}$. Then according to Corollary \ref{c1} and Lemma \ref{le3.2}, ${\rm dist}_{BM}\bigl(T_{R_{\varepsilon}},\mathcal B_{\hat c, k, \bar n_d}\bigr)< \sqrt{1+\varepsilon}$. 
For each $p\in T_{R_{\varepsilon}}$ we choose $q_p\in \mathcal B_{\hat c, k, \bar n_d}$ such that $d_{BM}(p,q_p)<\sqrt{1+\varepsilon}$. Then the multiplicative triangle inequality for $d_{BM}$ implies that open $d_{BM}$-``balls'' of radius $1+\varepsilon$ centered at points $q_p$, $p\in T_{R_{\varepsilon}}$, cover $\mathcal B_{\hat c, k, \bar n_d}$. Hence,
\begin{equation}\label{e3.6}
N(\mathcal B_{\hat c, k, \bar n_d},d_{BM}, 1+\varepsilon)\le {\rm card}\, T_{R_{\varepsilon}}\le \left(1 + \frac{2}{\xi}\right)^{N_{d,s}\cdot \bar n_d}.
\end{equation}

Next, the function $\varphi(x)=\ln (ex^k)^{\frac{1}{x}}$ decreases for $x\in [e^{\frac{k-1}{k}},\infty)$ and $\lim_{x\rightarrow\infty}\varphi(x)=0$. Its inverse $\varphi^{-1}$ on this interval has domain $(0,e^{-\frac{k-1}{k}}]$, increases and is easily seen (using that $\varphi\circ\varphi^{-1}={\rm id}$) to satisfy
\[
\varphi^{-1}(x)\le \frac{3k}{x}\cdot\ln\left(\frac{3k}{x}\right), \qquad x\in (0,e^{-\frac{k-1}{k}}].
\]
Since $\frac{1}{4}\ln(1+\varepsilon)<e^{-\frac{k-1}{k}}$ for $\varepsilon\in (0,\frac 12]$, the required $s_\varepsilon$ exists and the previous inequality implies that 
\begin{equation}\label{e3.7}
s_\varepsilon \le  \frac{12k}{\ln(1+\varepsilon)}\cdot\ln\left(\frac{12k}{\ln(1+\varepsilon)}\right).
\end{equation}

Further, we have
\begin{equation}\label{e3.8}
\frac{1}{\xi}=\frac{\bar n_d(1+R_\varepsilon)}{R_\varepsilon-1}=\frac{\bar n_d(\sqrt[4]{1+\varepsilon}+1)}{\sqrt[4]{1+\varepsilon}-1}=
\frac{\bar n_d(\sqrt[4]{1+\varepsilon}+1)^2\cdot (\sqrt{1+\varepsilon}+1)}{\varepsilon}.
\end{equation}
From \eqref{e3.6}, \eqref{e3.7}, \eqref{e3.8} invoking the definition of $N_{d,s}$ we obtain 
\[
\ln N(\mathcal B_{\hat c, k, \bar n_d},d_{BM}, 1+\varepsilon)\le  \bar n_d\hat cd^k\left(\ln(\hat cd^k)+1\right)^k\ln\left(\frac{21\bar n_d}{\varepsilon}\right) \left(\frac{12k}{\ln(1+\varepsilon)}\ln\left(\frac{12k}{\ln(1+\varepsilon)}\right)\right)^k.
\]
Using that $\bar n_d\le\hat cd^k$ and the inequality $\frac 23 \cdot\varepsilon\le\ln(1+\varepsilon)$, $\varepsilon\in (0,\frac 12]$, we get the required estimate.
\end{proof}


\end{document}